\author{Dilip Raghavan}
\thanks{First author partially supported by Grants-in-Aid for Scientific Researchfor JSPS
Fellows No.\ 23$\cdot$01017}
\address{Graduate school of system informatics\\
Kobe University\\
Kobe 657-8501, Japan.}
\email{raghavan@math.toronto.edu}
\urladdr{http://www.math.toronto.edu/raghavan}
\author{Saharon Shelah}
\thanks{Research partially supported by NSF grant DMS 1101597, and by
the United States-Israel Binational Science Foundation (Grant no. 2006108). Publication 991.}
\date{October 12, 2011}
\subjclass[2010]{03E35, 03E65, 03E17, 03E05}
\keywords{maximal almost disjoint family, dominating family}
\title{Comparing the closed almost disjointness and dominating numbers}
\def\polhk#1{\setbox0=\hbox{#1}{\ooalign{\hidewidth
    \lower1.5ex\hbox{`}\hidewidth\crcr\unhbox0}}}
\newtheorem{Theorem}{Theorem}
\newtheorem{Lemma}[Theorem]{Lemma}
\theoremstyle{definition}
\newtheorem{Question}[Theorem]{Question}
\theoremstyle{definition}
\newtheorem{Def}[Theorem]{Definition}
\theoremstyle{remark}
\newcommand{\restrict}{\upharpoonright}
\newcommand{\forallbutfin}{{\forall}^{\infty}}
\renewcommand{\c}{\mathfrak{c}}
\renewcommand{\b}{\mathfrak{b}}
\newcommand{\hh}{\mathbf{h}}
\renewcommand{\d}{\mathfrak{d}}
\newcommand{\s}{\mathfrak{s}}
\newcommand{\h}{\mathfrak{h}}
\renewcommand{\a}{{\mathfrak{a}}}
\newcommand{\ac}{{\mathfrak{a}}_{closed}}
\renewcommand{\[}{\left[}
\renewcommand{\]}{\right]}
\renewcommand{\P}{\mathbb{P}}
\newcommand{\lc}{\left|}
\newcommand{\rc}{\right|}
\newcommand\CH{\mathrm{CH}}   
\newcommand{\BS}{{\omega}^{\omega}}
\DeclareMathOperator{\rk}{rk}
\DeclareMathOperator{\dom}{dom}
\DeclareMathOperator{\ran}{ran}
\DeclareMathOperator{\cf}{cf}
\newcommand{\A}{{\mathscr{A}}}
\newcommand{\cube}{{\[\omega\]}^{\omega}}
\newcommand{\fin}{{{\[\omega\]}^{< \omega}}}
\newcommand{\V}{{\mathbf{V}}}
\newcommand{\LL}{{\mathbf{L}}}
\newcommand{\VG}{{{\mathbf{V}}[G]}}
\begin{document}
\begin{abstract}
	We prove that if there is a dominating family of size ${\aleph}_{1}$, then there is are ${\aleph}_{1}$ many compact subsets of $\BS$ whose union is a maximal almost disjoint family of functions that is also maximal with respect to infinite partial functions.
\end{abstract}
\maketitle
\section{Introduction} \label{sec:intro}
	Recall that two infinite subsets $a$ and $b$ of $\omega$ are \emph{almost disjoint or a.d.\@} if $a \cap b$ is finite. A family $\A$ of infinite subsets of $\omega$ is said to be \emph{almost disjoint or a.d.\@ in $\cube$} if its members are pairwise almost disjoint. A \emph{Maximal Almost Disjoint family, or MAD family in $\cube$} is an infinite a.d.\ family in $\cube$ that is not properly contained in a larger a.d.\ family. 

	Two functions $f$ and $g$ in $\BS$ are said to be \emph{almost disjoint or a.d.\@} if they agree in only finitely many places. We say that a family $\A \subset \BS$ is \emph{a.d.\ in $\BS$} if its members are pairwise a.d.\@, and we say that an a.d.\ family $\A \subset \BS$ is \emph{MAD in $\BS$} if $\forall f \in \BS \exists h \in \A \[\lc f \cap h \rc = {\aleph}_{0}\]$. Identifying functions with their graphs, every a.d.\ family in $\BS$ is also an a.d.\ family in ${\[\omega \times \omega \]}^{\omega}$; however, it is never MAD in ${\[\omega \times \omega\]}^{\omega}$ because any function is a.d.\ from the vertical columns of $\omega \times \omega$. MAD families in $\BS$ that become MAD in ${\[\omega \times \omega \]}^{\omega}$ when the vertical columns of $\omega \times \omega$ are thrown in were considered by Van Douwen.

	We say that $p \subset \omega \times \omega$ is an \emph{infinite partial function} if it is a function from some infinite set $A \subset \omega$ to $\omega$. An a.d.\ family $\A \subset \BS$ is said to be \emph{Van Douwen} if for any infinite partial function $p$ there is $h \in \A$ such that $\lc h \cap p \rc = {\aleph}_{0}$. $\A$ is Van Douwen iff $\A \cup \{{c}_{n}: n \in \omega\}$ is a MAD family in ${\[\omega \times \omega\]}^{\omega}$, where ${c}_{n}$ is the $n$th vertical column of $\omega \times \omega$. The first author showed in \cite{vmad} that Van Douwen MAD families always exist.

	Recall that $\b$ is the least size of an unbounded family in $\BS$, $\d$ is the least size of a dominating family in $\BS$, and $\a$ is the least size of a MAD family in $\cube$. It is well known that $\b \leq \a$. Whether $\a$ could consistently be larger than $\d$ was an open question for a long time, until Shelah achieved a breakthrough in \cite{sh700} by producing a model where $\d = {\aleph}_{2}$ and $\a = {\aleph}_{3}$. However, it is not known whether $\a$ can be larger than $\d$ when $\d = {\aleph}_{1}$; this is one of the few major remaining open problems in the theory of cardinal invariants posed during the earliest days of the subject (see \cite{vdsmall} and \cite{opentopII}). In this note we take a small step towards resolving this question by showing that if $\d = {\aleph}_{1}$, then there is a MAD family in $\cube$ which is the union of ${\aleph}_{1}$ compact subsets of $\cube$. More precisely, we will establish the following:
\begin{Theorem} \label{thm:main}
	Assume $\d = {\aleph}_{1}$. Then there exist ${\aleph}_{1}$ compact subsets of $\BS$ whose union is a Van Douwen MAD family.
\end{Theorem}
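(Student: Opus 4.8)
The plan is a transfinite recursion of length $\omega_{1}$ driven by a dominating scale. Since $\d = {\aleph}_{1}$ (hence $\b = \d = {\aleph}_{1}$), fix a $\le^*$-increasing dominating sequence $\langle {g}_{\alpha} : \alpha < \omega_{1} \rangle$ in $\BS$ with each ${g}_{\alpha}$ strictly increasing. Each compact set produced will have the form ${K}_{\alpha} = [{T}_{\alpha}]$ for a finitely branching subtree ${T}_{\alpha} \subseteq {\omega}^{< \omega}$, which makes ${K}_{\alpha}$ automatically a compact subset of $\BS$. Throughout the recursion I maintain that ${\mathcal{A}}_{\alpha} = \bigcup_{\beta \le \alpha} {K}_{\beta}$ is an a.d.\ family in $\BS$, and I arrange the bookkeeping so that in the end $\A = \bigcup_{\alpha < \omega_{1}} {K}_{\alpha}$ meets every infinite partial function infinitely often; this last property is exactly what is needed, since it says $\A$ is Van Douwen, and in particular (looking at total functions) MAD in $\BS$. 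The scale determines which stages are responsible for a given partial function: extending an infinite partial function $p$ arbitrarily to a total function, it is dominated by some ${g}_{\alpha}$, so $p(n) \le {g}_{\alpha}(n)$ for all but finitely many $n \in \dom p$, and hence from stage $\alpha$ onwards $p$ is visible at the right scale and may be caught.

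The engine of the construction is a one-step extension lemma, roughly of the form: from the $\sigma$-compact a.d.\ family ${\mathcal{A}}_{< \alpha} = \bigcup_{\beta < \alpha} {K}_{\beta}$, the bound ${g}_{\alpha}$, and a prescribed family $\mathcal{D}$ of infinite partial functions that are dominated by ${g}_{\alpha}$ on their domains and are not already caught by ${\mathcal{A}}_{< \alpha}$, one produces a finitely branching tree ${T}_{\alpha}$ such that ${\mathcal{A}}_{< \alpha} \cup [{T}_{\alpha}]$ is again an a.d.\ family and every member of $\mathcal{D}$ is met infinitely often by some branch of ${T}_{\alpha}$. What makes an ${\aleph}_{1}$-step recursion able to handle all of the (possibly more than ${\aleph}_{1}$-many) partial functions is that a single compact a.d.\ family can simultaneously catch a whole perfect set of partial functions: for instance, if the branches of ${T}_{\alpha}$ run through codes for initial segments of reals, then $[{T}_{\alpha}]$ is an a.d.\ family, and it meets every partial function whose values, along an infinite subset of its domain, code the initial segments of one fixed real. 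So each stage kills off continuum-many partial functions at once.

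For the bookkeeping one uses $\d = {\aleph}_{1}$ to fix a list of ``targets'' $\langle {\mathcal{D}}_{\alpha} : \alpha < \omega_{1} \rangle$, where each ${\mathcal{D}}_{\alpha}$ is a family of infinite partial functions at scale ${g}_{\alpha}$ that can be caught by a single compact a.d.\ family, and such that every infinite partial function lies in some ${\mathcal{D}}_{\alpha}$. (This is where the cardinal equality $\d = {\aleph}_{1}$ really enters: the space of infinite partial functions has to be written as an ${\aleph}_{1}$-union of such catchable pieces, and the scale provides the layering that makes this possible; one should expect to interleave the choice of the targets with the recursion rather than fixing it rigidly beforehand.) Then at stage $\alpha$ one feeds to the extension lemma the family ${\mathcal{D}}_{\alpha}$ with whatever is already caught by ${\mathcal{A}}_{< \alpha}$ removed, obtains ${K}_{\alpha}$, and at limit stages simply takes unions. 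The resulting $\A$ is an a.d.\ family of functions meeting every infinite partial function infinitely often, i.e.\ a Van Douwen MAD family, and it is visibly a union of ${\aleph}_{1}$ compact sets.

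The main obstacle --- where essentially all the difficulty lies --- is the extension lemma itself, entangled with the verification that the targets can be made to exhaust everything. The lemma must simultaneously: (a) meet a prescribed rich family of partial functions by branches of one finitely branching tree; (b) keep those branches pairwise almost disjoint; and (c) keep them almost disjoint from the previously built $\sigma$-compact family --- requirements that pull against each other, since (a) wants the tree bushy while (b) and (c) want it sparse, and reconciling them with the constraints coming from the scale is delicate. A natural device for decoupling (b)--(c) from (a) is to reserve, for the branches of each ${T}_{\alpha}$, a coordinate-wise residue pattern --- modulo a modulus chosen adaptively so as to grow along the scale --- that is eventually different from the patterns used at every other stage, so that distinct ${K}_{\alpha}$'s become automatically almost disjoint and only the internal almost disjointness of each ${K}_{\alpha}$, together with its catching property, has to be built by hand. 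Arranging the residue pattern to be fine enough not to destroy the ability to catch partial functions, while simultaneously securing a system of targets through which no infinite partial function can slip, is the heart of the argument.
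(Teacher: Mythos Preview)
Your outline correctly identifies the outer architecture --- a transfinite recursion of length $\omega_{1}$ producing finitely branching trees $T_{\alpha}$, driven by a dominating scale --- and this matches the paper. However, what you call the ``extension lemma'' is the entire content of the theorem, and you do not prove it; you explicitly flag it as ``the main obstacle'' and leave it open. Two of the specific devices you propose do not work as stated, and the paper's actual mechanism is quite different.

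First, the residue-pattern idea for making distinct $[T_{\alpha}]$, $[T_{\beta}]$ automatically almost disjoint conflicts with the catching requirement: if branches of $T_{\alpha}$ are constrained coordinate-wise to a fixed residue class, then $[T_{\alpha}]$ can only meet partial functions whose values hit that class infinitely often, and there is no evident way to cover all infinite partial functions by $\aleph_{1}$ such classes while retaining enough freedom to thread around the earlier trees. The paper does \emph{not} decouple inter-stage almost disjointness from catching; both are handled simultaneously inside the level-by-level construction of $T_{\alpha}$.

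Second, and more fundamentally, the paper does not use a pre-assigned target list $\langle \mathcal{D}_{\alpha} \rangle$. The difficulty with any such list is that whether a partial function $p$ can be caught at stage $\alpha$ depends not only on $p$ and the scale but on the earlier trees $T_{\epsilon}$ for $\epsilon < \alpha$ --- specifically, on finite-information witnesses that $p$ is a.d.\ from each $[T_{\epsilon}]$. The paper's key device is an ordinal-valued rank function $H_{T,p}: T \to \omega_{1}$ (Definitions~\ref{def:rk} and~\ref{def:rk1}) encoding exactly this witness; finite approximations to the functions $H_{T_{\epsilon},p}$ are carried as labels on nodes of $T_{\alpha}$ during its construction, and they are what allow a branch of $T_{\alpha}$ to agree with $p$ infinitely often while provably remaining a.d.\ from each earlier $[T_{\epsilon}]$. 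Van Douwen MADness is then verified (Lemma~\ref{lem:vd}) not by bookkeeping but by an elementary-submodel reflection: given $p$, take countable $M \prec H(\theta)$ containing everything relevant, set $\alpha = M \cap \omega_{1}$, and observe that every $H_{T_{\epsilon},p}$ for $\epsilon < \alpha$ lies in $M$ and hence takes values below $\alpha$, which is precisely what the construction of $T_{\alpha}$ was designed to exploit. Your outline has no analogue of the rank function, and without it there is no handle on the interaction between catching $p$ and staying a.d.\ from the previously built compact sets.
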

The cardinal invariant $\ac$ was recently introduced and studied by Brendle and Khomskii~\cite{perfectmad} in connection with the possible descriptive complexities of MAD families in certain forcing extensions of $\LL$.
\begin{Def} \label{def:ac}
	$\ac$ is the least $\kappa$ such that there are $\kappa$ closed subsets of $\cube$ whose union is a MAD family in $\cube$. 
\end{Def}
Obviously, $\ac \leq \a$. Brendle and Khomskii showed in \cite{perfectmad} that $\ac$ behaves differently from $\a$ by producing a model where $\ac = {\aleph}_{1} < {\aleph}_{2} = \b$. They asked whether $\s = {\aleph}_{1}$ implies that $\ac = {\aleph}_{1}$. As $\s \leq \d$, our result in this paper provides a partial positive answer to their question.
\section{The construction} \label{sec:proof}
Assume $\d = \aleph_1$ in this section. We will build $\aleph_1$ many compact subsets of $\BS$ whose union is a Van Douwen MAD family. To this end, we will construct a sequence $\langle {T}_{\alpha}: \alpha < \omega_1 \rangle$ of finitely branching subtrees of ${\omega}^{< \omega}$ such that ${\bigcup}_{\alpha < \omega_1}{\[{T}_{\alpha}\]}$ has the required properties. Henceforth, $T \subset {\omega}^{< \omega}$ will mean $T$ is a \emph{subtree} of ${\omega}^{< \omega}$. 
\begin{Def} \label{def:rk}
	Let $T \subset {\omega}^{< \omega}$. Let $A \in \cube$ and $p: A \rightarrow \omega$. For any ordinal $\xi$ and $\sigma \in T$ define ${\rk}_{T, p}(\sigma) \geq \xi$ to mean 
	\begin{align*}
		\forall \zeta < \xi \exists \tau \in T \exists l \in A\[\tau \supset \sigma \wedge \lc \sigma \rc \leq l < \lc \tau \rc \wedge \tau(l) = p(l) \wedge {\rk}_{T, p}(\tau) \geq \zeta\].
	\end{align*}
\end{Def}
Note that if $\eta \leq \xi$ and ${\rk}_{T, p}(\sigma) \geq \xi$, then ${\rk}_{T, p}(\sigma) \geq \eta$, and that for a limit ordinal $\xi$, if $\forall \zeta < \xi\[{\rk}_{T, p}(\sigma) \geq \zeta\]$, then ${\rk}_{T, p}(\sigma) \geq \xi$. Also, for any $\sigma, \tau \in T$, if $\sigma \subset \tau$ and ${\rk}_{T, p}(\tau) \geq \xi$, then ${\rk}_{T, p}(\sigma) \geq \xi$. Moreover, if ${\rk}_{T, p}(\sigma) \not\geq \xi$ and if $\tau \in T$ and $l \in A$ are such that $\tau \supset \sigma$, $\lc \sigma \rc \leq l < \lc \tau \rc$, and $p(l) = \tau(l)$, then there is $\zeta < \xi$ such that ${\rk}_{T, p}(\tau) \not\geq \zeta$. Therefore, if there is $f \in \[T\]$ with $\lc f \cap p \rc = {\aleph}_{0}$, and $\sigma \subset f$ and there is some ordinal $\xi$ such that ${\rk}_{T, p}(\sigma) \not\geq \xi$, then is some $\sigma \subset \tau \subset f$ and some ordinal $\zeta < \xi$ such that ${\rk}_{T, p}(\tau) \not\geq \zeta$, thus allowing us to construct an infinite, strictly descending sequence of ordinals. So if $f \in \[T\]$ with $\lc f \cap p \rc = {\aleph}_{0}$, then for any $\sigma \subset f$ and any ordinal $\xi$, ${\rk}_{T, p}(\sigma) \geq \xi$. On the other hand, suppose that $\sigma \in T$ with ${\rk}_{T, p}(\sigma) \geq {\omega}_{1}$. Then there is $\tau \in T$ with $\tau \supset \sigma$ and $l \in A$ such that $\lc \sigma \rc \leq l < \lc \tau \rc$, $p(l) = \tau(l)$, and ${\rk}_{T, p}(\tau) \geq {\omega}_{1}$, allowing us to construct $f \in \[T\]$ with $\sigma \subset f$ such that $\lc f \cap p \rc = {\aleph}_{0}$.
\begin{Def} \label{def:rk1}
	Suppose $T \subset {\omega}^{< \omega}$, $A \in \cube$, and $p: A \rightarrow \omega$. Assume that $p$ is a.d.\ from each $f \in \[T\]$. Then define ${H}_{T, p}: T \rightarrow {\omega}_{1}$ by ${H}_{T, p}(\sigma) = \min\{\xi: {\rk}_{T, p}(\sigma) \not\geq \xi + 1\}$.
\end{Def}
Note the following features of this definition
	\begin{enumerate}
		\item[(${\ast}_{1}$)]
			$\forall \sigma, \tau \in T\[\sigma \subset \tau \implies {H}_{T, p}(\sigma) \geq {H}_{T, p}(\tau)\]$
		\item[(${\ast}_{2}$)]
			for all $\sigma, \tau \in T$ with $\sigma \subset \tau$, if there exists $l \in A$ such that $\lc \sigma \rc \leq l < \lc \tau \rc$ and $p(l) = \tau(l)$, then ${H}_{T, p}(\tau) < {H}_{T, p}(\sigma)$.
	\end{enumerate}
On the other hand, notice that if there is a function $H: T \rightarrow {\omega}_{1}$ such that (${\ast}_{1}$) and (${\ast}_{2}$) hold when ${H}_{T, p}$ is replaced with $H$, then $p$ must be a.d.\ from $[T]$.
\begin{Def} \label{def:interval}
	$I$ is said to be an \emph{interval partition} if $I = \langle {i}_{n}: n \in \omega \rangle$, where ${i}_{0} = 0$, and $\forall n \in \omega \[{i}_{n} < {i}_{n + 1}\]$. For $n \in \omega$, ${I}_{n}$ denotes the interval $[{i}_{n}, {i}_{n + 1})$. 

	Given two interval partitions $I$ and $J$, we say that $I$ \emph{dominates} $J$ and write $J {\leq}^{\ast} I$ if $\forallbutfin n \in \omega \exists k \in \omega \[{J}_{k} \subset {I}_{n} \]$.   
\end{Def}
It is well known that $\d$ is also the size of the smallest family of interval partitions dominating any interval partition. So fix a sequence $\langle {I}^{\alpha}: \alpha < {\omega}_{1} \rangle$ of interval partitions such that
	\begin{enumerate}
		\item
			$\forall \alpha \leq \beta < {\omega}_{1} \[{I}^{\alpha} {\leq}^{\ast} {I}^{\beta} \]$
		\item
			for any interval partition $J$, there exists $\alpha < {\omega}_{1}$ such that $J {\leq}^{\ast} {
I}^{\alpha}$.
	\end{enumerate}
Fix an $\omega_1$-scale $\langle {f}_{\alpha}: \alpha < \omega_1 \rangle$ such that $\forall \alpha < \omega_1 \forall n \in \omega \[f_\alpha(n) <  f_\alpha(n + 1) \]$. For each $\alpha \geq 1$, define ${e}_{\alpha}$ and $g_\alpha$ by induction on $\alpha$ as follows. If $\alpha$ is a successor, then $e_\alpha: \omega \rightarrow \alpha$ is any onto function, and ${g}_{\alpha} = f_\alpha$. If $\alpha$ is a limit, then let $\{e_n: n \in \omega\}$ enumerate $\{{e}_{\xi}: \xi < \alpha\}$. Now, define ${e}_{\alpha}: \omega \rightarrow \alpha$ and $g_\alpha \in \BS$ such that
		\begin{enumerate}
			\item[(3)]
				$\forall n \in \omega \[g_\alpha(n) \leq g_\alpha(n + 1)\]$
			\item[(4)]
				$\forall n \in \omega \forall i \leq n \forall j \leq f_\alpha(n) \exists k < g_\alpha(n)\[e_\alpha(k) = e_i(j)\]$.
		\end{enumerate} 
Observe that such an $e_\alpha$ must be a surjection. For each $n \in \omega$, put ${w}_{\alpha}(n) = \{{e}_{\alpha}(i): i \leq {g}_{\alpha}(n)\}$.

	Now fix $\alpha < \omega_1$ and assume that ${T}_{\epsilon} \subset {\omega}^{< \omega}$ has been defined for each $\epsilon < \alpha$ such that each ${T}_{\epsilon}$ is finitely branching and ${\bigcup}_{\epsilon < \alpha}{\[{T}_{\epsilon}\]}$ is an a.d.\ family in $\BS$. Let $\langle {\epsilon}_{n}: n \in \omega \rangle$ enumerate $\alpha$, possibly with repetitions. For a tree $T \subset {\omega}^{< \omega}$ and $l \in \omega$, $T \restrict l$ denotes $\{\sigma \in T: \lc \sigma \rc \leq l \}$, and $T(l)$ denotes $\{\sigma \in T: \lc \sigma \rc = l \}$. We will define a sequence of natural numbers $0 = {l}_{0} < {l}_{1} < \dotsb$ and determine ${T}_{\alpha} \restrict {l}_{n}$ by induction on $n$. ${T}_{\alpha} \restrict {l}_{0} = \{0\}$. Assume that ${l}_{n}$ and ${T}_{\alpha} \restrict {l}_{n}$ are given. Suppose also that we are given a sequence of natural numbers $\langle {k}_{i}: i < n \rangle$ such that
	\begin{enumerate}
		\item[(5)]
			$\forall i < i + 1 < n \[{k}_{i} < {k}_{i + 1}\]$
		\item[(6)]
			${I}^{\alpha}_{{k}_{i}} \subset [0, {l}_{n})$.
	\end{enumerate}
Let ${\sigma}^{\ast}$ denote the member of ${T}_{\alpha}({l}_{n})$ that is right most with respect to the lexicographical ordering on ${\omega}^{{l}_{n}}$. Suppose we are also given ${L}_{n}: {T}_{\alpha}({l}_{n}) \setminus \{{\sigma}^{\ast}\} \rightarrow {W}_{n}$, an injection. Here ${W}_{n}$ is the set of all pairs $\langle {p}_{0}, \bar{h} \rangle$ such that
	\begin{enumerate}
		\item[(7)]
			there are $s \in \fin$, and numbers ${i}_{0} < {j}_{0} \leq n$ such that
				\begin{enumerate}
					\item[(a)]
						$s \subset {\bigcup}_{i \in [{i}_{0}, {j}_{0})}{{I}^{\alpha}_{{k}_{i}}}$
					\item[(b)]
						for each $i \in [{i}_{0}, {j}_{0})$, $\lc s \cap {I}^{\alpha}_{{k}_{i}}\rc = 1$
					\item[(c)]
						${p}_{0}: s \rightarrow \omega$ such that $\forall m \in s \[{p}_{0}(m) \leq {f}_{\alpha}(m)\]$
				\end{enumerate} 
		\item[(8)]
			There is ${j}_{1} < n$ such that $\bar{h} = \langle {h}_{{\epsilon}_{i}}: i \leq {j}_{1}\rangle$ (if $\alpha = 0$, this means that $\bar{h} = 0$). For each $i \leq {j}_{1}$, ${h}_{{\epsilon}_{i}}: {T}_{{\epsilon}_{i}} \restrict \max{(s)} + 1 \rightarrow {w}_{\alpha}(\max{(s)} + 1)$ such that (${\ast}_{1}$) and (${\ast}_{2}$) hold when $T$ is replaced there with ${T}_{{\epsilon}_{i}} \restrict \max{(s)} + 1$, ${H}_{T, p}$ is replaced with ${h}_{{\epsilon}_{i}}$, $A$ with $s$, and $p$ with ${p}_{0}$. 	
	\end{enumerate}
Assume that for each $i < n$, we are also given ${\sigma}_{i} \in {T}_{\alpha}({l}_{i})$, which we will call \emph{the active node at stage $i$}. Note that ${T}_{\alpha}({l}_{0}) = \{0\}$, and so ${\sigma}_{0} = 0$. For each $\sigma \in {T}_{\alpha}({l}_{n})$, let $\Delta(\sigma) = \max\left( \{ 0 \} \cup \{i < n: {\sigma}_{i} = \sigma \restrict {l}_{i}\} \right)$. For, $\sigma, \tau \in {T}_{\alpha}({l}_{n})$, say $\sigma \vartriangleleft \tau$ if either $\Delta(\sigma) < \Delta(\tau)$ or $\Delta(\sigma) = \Delta(\tau)$ and $\sigma$ is to the left of $\tau$ in the lexicographic ordering on ${\omega}^{{l}_{n}}$. Let ${\sigma}_{n}$ be the $\vartriangleleft$-minimal member of ${T}_{\alpha}({l}_{n})$. ${\sigma}_{n}$ will be active at stage $n$. The meaning of this is that none of the other nodes in ${T}_{\alpha}({l}_{n})$ will be allowed to branch at stage $n$. Choose ${k}_{n}$ greater than all ${k}_{i}$ for $i < n$ such that ${I}^{\alpha}_{{k}_{n}} \subset [{l}_{n}, \infty)$. Let ${V}_{n}$ be the set of all pairs $\langle {p}_{1}, \bar{\hh} \rangle$ such that
	\begin{enumerate}
		\item[(9)]
			there exist $s$ and a natural number ${i}_{1} \leq n$ such that
				\begin{enumerate}
					\item[(a)]
						$s \subset {\bigcup}_{i \in [{i}_{1}, n + 1)}{{I}^{\alpha}_{{k}_{i}}}$
					\item[(b)]
						for each $i \in [{i}_{1}, n + 1)$, $\lc s \cap {I}^{\alpha}_{{k}_{i}} \rc = 1$
					\item[(c)]
						${p}_{1}: s \rightarrow \omega$ such that $\forall m \in s \[{p}_{1}(m) \leq {f}_{\alpha}(m)\]$
				\end{enumerate}
		\item[(10)]
			There is ${j}_{2} \leq n$ such that $\bar{\hh} = \langle {\hh}_{{\epsilon}_{i}}: i \leq {j}_{2} \rangle$. For each $i \leq {j}_{2}$, ${\hh}_{{\epsilon}_{i}}: {T}_{{\epsilon}_{i}} \restrict \max{(s)} + 1 \rightarrow {w}_{\alpha}(\max{(s)} + 1)$ such that (${\ast}_{1}$) and (${\ast}_{2}$) are satisfied when $T$ is replaced with ${T}_{{\epsilon}_{i}} \restrict \max{(s)} + 1$, ${H}_{T, p}$ is replaced with ${\hh}_{{\epsilon}_{i}}$, $A$ with $s$, and $p$ with ${p}_{1}$.
	\end{enumerate}
Note that ${V}_{n}$ is always finite. Now, the construction splits into two cases.

Case I: ${\sigma}_{n} \neq {\sigma}^{\ast}$. Put $\langle {p}_{0}, \bar{h} \rangle = {L}_{n}({\sigma}_{n})$. Let ${i}_{0}< n$ be as in (7) above, and let ${j}_{1} < n $ be as in (8). Let
	\begin{align*} 
		{U}_{n} = \left\{ \langle {p}_{1}, \bar{\hh} \rangle \in {V}_{n}: {p}_{0} \subset {p}_{1} \wedge {i}_{0} = {i}_{1} \wedge {j}_{1} < {j}_{2} \wedge \forall i \leq {j}_{1} \[{\hh}_{{\epsilon}_{i}} \restrict \dom ({h}_{{\epsilon}_{i}}) = {h}_{{\epsilon}_{i}} \] \right\}.
	\end{align*}
Here ${i}_{1}$ is as in (9), and ${j}_{2}$ is as in (10) with respect to $\langle {p}_{1}, \bar{\hh} \rangle$. Now choose ${l}_{n + 1} > {l}_{n}$ large enough so that ${I}^{\alpha}_{{k}_{n}} \subset [{l}_{n}, {l}_{n + 1})$ and so that it is possible to pick $\{{\tau}_{x}: x \in {U}_{n}\} \subset {\omega}^{{l}_{n + 1}}$ and $\{{\tau}_{\sigma}: \sigma \in {T}_{\alpha}({l}_{n})\} \subset {\omega}^{{l}_{n + 1}}$ such that the following conditions are satisfied.
	\begin{enumerate}
		\item[(11)]
			for each $x \in {U}_{n}$, ${\tau}_{x} \supset {\sigma}_{n}$, and for each $\sigma \in {T}_{\alpha}({l}_{n})$, ${\tau}_{\sigma} \supset \sigma$
		\item[(12)]
			for each $x, y \in {U}_{n}$, if $x \neq y$, then there exists $m \in [{l}_{n}, {l}_{n + 1})$ such that ${\tau}_{x}(m) \neq {\tau}_{y}(m)$. For each $x \in {U}_{n}$, there exists $m \in [{l}_{n}, {l}_{n + 1})$ such that ${\tau}_{x}(m) \neq {\tau}_{{\sigma}_{n}}(m)$. For $x = \langle {p}_{1}, \bar{\hh} \rangle \in {U}_{n}$, if $\{{i}^{\ast}\} = \dom{({p}_{1})} \cap {I}^{\alpha}_{{k}_{n}}$, then ${p}_{1}({i}^{\ast}) = {\tau}_{x}({i}^{\ast})$.		
		\item[(13)]
			for each $x \in {U}_{n}$ and $\sigma \in {T}_{\alpha}({l}_{n})$, $\forall m \in [{l}_{n}, {l}_{n + 1})\[{\tau}_{x}(m) \neq {\tau}_{\sigma}(m)\]$. For $\sigma, \eta \in {T}_{\alpha}({l}_{n})$, if $\sigma \neq \eta$, then $\forall m \in [{l}_{n}, {l}_{n + 1})\[{\tau}_{\sigma}(m) \neq {\tau}_{\eta}(m)\]$.
		\item[(14)]
			for each $i \leq n$, $\tau \in {T}_{{\epsilon}_{i}}({l}_{n + 1})$, $\sigma \in {T}_{\alpha}({l}_{n})$ and $m \in [{l}_{n}, {l}_{n + 1})$, $\tau(m) \neq {\tau}_{\sigma}(m)$. For each $x \in {U}_{n}$, $i \leq {j}_{2}$, $\tau \in {T}_{{\epsilon}_{i}}({l}_{n + 1})$ and $m \in [{l}_{n}, {l}_{n + 1})$, if ${\tau}_{x}(m) = \tau(m)$, then $m \in \dom{({p}_{1})}$ and ${p}_{1}(m) = {\tau}_{x}(m)$.
	\end{enumerate}
Define ${L}_{n + 1}$ as follows. For any $x \in {U}_{n}$, ${L}_{n + 1}({\tau}_{x}) = x$. For any $\sigma \in {T}_{\alpha}({l}_{n}) \setminus \{{\sigma}^{\ast}\}$, ${L}_{n + 1}({\tau}_{\sigma}) = {L}_{n}(\sigma)$. This finishes case 1.

	Case II: ${\sigma}_{n} = {\sigma}^{\ast}$. For each $\sigma \in {T}_{\alpha}({l}_{n}) \setminus \{{\sigma}_{n}\}$, let $\langle {p}_{0}(\sigma), \bar{h}(\sigma) \rangle = {L}_{n}(\sigma)$. Let ${i}_{0}(\sigma) < n$ witness (7) for ${L}_{n}(\sigma)$ and let ${j}_{1}(\sigma) < n$ witness (8) for ${L}_{n}(\sigma)$. Let ${U}_{n}$ be the set of all $\langle {p}_{1}, \bar{\hh} \rangle \in {V}_{n}$ such that there is no $\sigma \in {T}_{\alpha}({l}_{n}) \setminus \{{\sigma}_{n}\}$ so that  
	\begin{align*}
		{p}_{0}(\sigma) \subset {p}_{1} \wedge {i}_{0}(\sigma) = {i}_{1} \wedge {j}_{1}(\sigma) < {j}_{2} \wedge \forall i \leq {j}_{1}(\sigma) \[{\hh}_{{\epsilon}_{i}} \restrict \dom{({h}_{{\epsilon}_{i}})} = {h}_{{\epsilon}_{i}}\].
	\end{align*} 
Here ${i}_{1} \leq n$ and ${j}_{2} \leq n$ witness (9) and (10) respectively with respect to $\langle {p}_{1}, \bar{\hh} \rangle$. Choose ${l}_{n + 1} > {l}_{n}$ large enough so that ${I}^{\alpha}_{{k}_{n}} \subset [{l}_{n}, {l}_{n + 1})$ and so that it is possible to choose $\{{\tau}^{\ast}\}$, $\{{\tau}_{x}: x \in {U}_{n}\}$, and $\{{\tau}_{\sigma}: \sigma \in {T}_{\alpha}({l}_{n}) \setminus \{{\sigma}_{n}\} \}$, subsets of ${\omega}^{{l}_{n + 1}}$, satisfying the following conditions.
	\begin{enumerate}
		\item[(15)]
			${\tau}^{\ast} \supset {\sigma}_{n}$. For each $x \in {U}_{n}$, ${\tau}_{x} \supset {\sigma}_{n}$. For each $\sigma \in {T}_{\alpha}({l}_{n}) \setminus \{{\sigma}_{n}\}$, ${\tau}_{\sigma} \supset \sigma$.
		\item[(16)]
			${\tau}^{\ast}$ is the right most branch of ${T}_{\alpha}({l}_{n + 1})$. For each $x \in {U}_{n}$, there exists $m \in [{l}_{n}, {l}_{n + 1})$ such that ${\tau}^{\ast}(m) \neq {\tau}_{x}(m)$. For each $x, y \in {U}_{n}$, if $x \neq y$, then there is $m \in [{l}_{n}, {l}_{n + 1})$ so that ${\tau}_{x}(m) \neq {\tau}_{y}(m)$. For each $x = \langle {p}_{1}, \bar{\hh} \rangle \in {U}_{n}$, if $\{{i}^{\ast}\} = {I}^{\alpha}_{{k}_{n}} \cap \dom{({p}_{1})}$, then ${p}_{1}({i}^{\ast}) = {\tau}_{x}({i}^{\ast})$.
		\item[(17)]
			For each $x \in {U}_{n}$ and $m \in [{l}_{n}, {l}_{n + 1})$, ${\tau}_{x}(m) \neq {\tau}^{\ast}(m)$. For each $\sigma \in {T}_{\alpha}({l}_{n}) \setminus \{{\sigma}_{n}\}$ and for each $m \in [{l}_{n}, {l}_{n + 1})$, ${\tau}^{\ast}(m) \neq {\tau}_{\sigma}(m)$, and for each $x \in {U}_{n}$, ${\tau}_{\sigma}(m) \neq {\tau}_{x}(m)$. For each $\sigma, \eta \in {T}_{\alpha}({l}_{n}) \setminus \{{\sigma}_{n}\}$, if $\sigma \neq \eta$, then for all $m \in [{l}_{n}, {l}_{n + 1})$, ${\tau}_{\sigma}(m) \neq {\tau}_{\eta}(m)$.
		\item[(18)]
			For each $i \leq n$, $\tau \in {T}_{{\epsilon}_{i}}({l}_{n + 1})$, $m \in [{l}_{n}, {l}_{n + 1})$, and $\sigma \in {T}_{\alpha}({l}_{n}) \setminus \{{\sigma}_{n}\}$, ${\tau}^{\ast}(m) \neq \tau(m)$ and ${\tau}_{\sigma}(m) \neq \tau(m)$. For each $x = \langle {p}_{1}, \bar{\hh} \rangle \in {U}_{n}$, $i \leq {j}_{2}$, $\tau \in {\tau}_{{\epsilon}_{i}}({l}_{n + 1})$ and $m \in [{l}_{n}, {l}_{n + 1})$, if ${\tau}_{x}(m) = \tau(m)$, then $m \in \dom{({p}_{1})}$ and ${p}_{1}(m) = {\tau}_{x}(m)$. 
	\end{enumerate}
For each $\sigma \in {T}_{\alpha}({l}_{n}) \setminus \{{\sigma}_{n}\}$, define ${L}_{n + 1}({\tau}_{\sigma}) = {L}_{n}(\sigma)$. For each $x \in {U}_{n}$, set ${L}_{n + 1}({\tau}_{x}) = x$. This completes the construction. We now check that it is as required.
\begin{Lemma} \label{lem:activity}
	For each $f \in \[{T}_{\alpha}\]$, there are infinitely many $n \in \omega$ such that ${\sigma}_{n} = f \restrict {l}_{n}$.
\end{Lemma}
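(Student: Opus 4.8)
The plan is to argue by contradiction using a "fairness" argument that counts, stage by stage, how many nodes have small $\Delta$-value. Suppose toward a contradiction that some $f \in [{T}_{\alpha}]$ satisfies ${\sigma}_{n} = f \restrict {l}_{n}$ for only finitely many $n$, and fix $N$ strictly larger than all such $n$; thus ${\sigma}_{n} \neq f \restrict {l}_{n}$ for every $n \geq N$. Since ${\sigma}_{n}$ is by definition the $\vartriangleleft$-minimal element of ${T}_{\alpha}({l}_{n})$, this forces ${\sigma}_{n} \vartriangleleft f \restrict {l}_{n}$ for all $n \geq N$.

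The first step is to record how $\Delta$ changes in passing from stage $n$ to stage $n+1$; write ${\Delta}_{n}$ for the $\Delta$ of stage $n$. An easy inspection of the definition gives the \emph{inheritance formula}: if $\tau \in {T}_{\alpha}({l}_{n + 1})$ and $\sigma = \tau \restrict {l}_{n} \in {T}_{\alpha}({l}_{n})$, then ${\Delta}_{n + 1}(\tau) = n$ when $\sigma = {\sigma}_{n}$, and ${\Delta}_{n + 1}(\tau) = {\Delta}_{n}(\sigma)$ when $\sigma \neq {\sigma}_{n}$. Two consequences will be used: first, since ${\sigma}_{n} \neq f \restrict {l}_{n}$ for $n \geq N$, the value ${\Delta}_{n}(f \restrict {l}_{n})$ is the same for all $n \geq N$ — call it $d$, and note $d < N$ since every stage witnessing ${\Delta}_{N}(f\restrict {l}_{N})$ lies below $N$; second, every extension of the active node ${\sigma}_{n}$ receives the value $n$, which is $> d$ for $n \geq N$.

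The heart of the argument is to track the finite cardinal ${c}_{n} = \lc \{ \sigma \in {T}_{\alpha}({l}_{n}) : {\Delta}_{n}(\sigma) \leq d \} \rc$ for $n \geq N$. Since $f \restrict {l}_{n}$ always lies in this set, ${c}_{n} \geq 1$. I claim ${c}_{n + 1} = {c}_{n} - 1$ for all $n \geq N$. Reading off the construction in Case I and in Case II, ${T}_{\alpha}({l}_{n + 1})$ is precisely $\{ {\tau}_{\sigma} : \sigma \in {T}_{\alpha}({l}_{n}) \}$, one extension per node, enlarged only by further extensions of the single node ${\sigma}_{n}$ — the ${\tau}_{x}$ for $x \in {U}_{n}$, together with ${\tau}^{\ast}$ in Case II — with all of these pairwise distinct. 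By the inheritance formula the extension ${\tau}_{\sigma}$ of a node $\sigma \neq {\sigma}_{n}$ keeps ${\Delta}_{n}(\sigma)$, hence remains in the set exactly when $\sigma$ was; while all extensions of ${\sigma}_{n}$ acquire $\Delta$-value $n > d$ and so drop out. Finally ${\sigma}_{n}$ itself was in the set, because ${\sigma}_{n} \vartriangleleft f \restrict {l}_{n}$ yields ${\Delta}_{n}({\sigma}_{n}) \leq {\Delta}_{n}(f \restrict {l}_{n}) = d$. So exactly one element is removed from the set and none added, giving ${c}_{n + 1} = {c}_{n} - 1$. Then $\langle {c}_{n} : n \geq N \rangle$ is a strictly decreasing sequence of positive integers — absurd. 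This contradiction proves the lemma.

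I expect the only genuinely fiddly point — the "main obstacle" — to be the bookkeeping check that in each of the two cases the non-active nodes split exactly once, i.e.\ that every $\sigma \in {T}_{\alpha}({l}_{n}) \setminus \{{\sigma}_{n}\}$ has a unique extension in ${T}_{\alpha}({l}_{n+1})$ while all the nodes listed in the construction are pairwise distinct; this has to be extracted from clauses (11)--(14) for Case I and (15)--(18) for Case II. Once that is in hand, the rest is the elementary manipulation of $\Delta$ described above together with the well-foundedness of $\omega$.
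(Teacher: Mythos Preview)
Your proof is correct, and the route is genuinely different from the paper's, though both rest on the same inheritance formula for $\Delta$ that you isolate. The paper instead defines $\Theta(n) = \min\{\Delta(\sigma) : \sigma \in {T}_{\alpha}({l}_{n})\}$, observes that $\Theta$ is non-decreasing, and argues that if the lemma fails then $\Theta$ must stabilize at some value $m$; it then uses pigeonhole to fix a $\tau \in {T}_{\alpha}({l}_{m+1})$ below which nodes of $\Delta$-value $m$ persist forever, takes the lex-leftmost such $\tau$, and derives a contradiction from the definition of the active node. Your approach---freezing $d = {\Delta}_{n}(f \restrict {l}_{n})$ and watching the finite set $\{\sigma : {\Delta}_{n}(\sigma) \leq d\}$ lose exactly one element per stage---is more direct and avoids the leftmost-selection step entirely; the paper's argument, by contrast, does not need to fix a particular branch $f$ in advance. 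One small inaccuracy in your bookkeeping: in Case~II there is no ${\tau}_{{\sigma}_{n}}$, so your description of ${T}_{\alpha}({l}_{n+1})$ should read $\{{\tau}_{\sigma} : \sigma \in {T}_{\alpha}({l}_{n}) \setminus \{{\sigma}_{n}\}\}$ there, with ${\tau}^{\ast}$ serving as the canonical extension of ${\sigma}_{n}$. This does not affect the argument, since all you actually use is that each non-active node has a unique successor and every successor of the active node receives $\Delta$-value $n$.
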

\begin{proof}
	For each $n \in \omega$ put $\Theta(n) = \min{\{\Delta(\sigma): \sigma \in {T}_{\alpha}({l}_{n})\}}$. It is clear from the construction that $\Theta(n + 1) \geq \Theta(n)$. If the lemma fails, then there are $m$ and $\tau \in {T}_{\alpha}({l}_{m + 1})$ with the property that for infinitely many $n > m + 1$, there is a $\sigma \in {T}_{\alpha}({l}_{n})$ such that $\Theta(n) = \Delta(\sigma) = m$ and $\sigma \restrict {l}_{m + 1} = \tau$. Let $\tau$ be the left most node in ${T}_{\alpha}({l}_{m + 1})$ with this property. Choose ${n}_{1} > {n}_{0} > m + 1$ and $\sigma \in {T}_{\alpha}({l}_{{n}_{1}})$ such that $\Theta({n}_{1}) = \Theta({n}_{0}) = \Delta(\sigma) = m$, $\sigma \restrict {l}_{m + 1} = \tau$, and there is no $\eta \in {T}_{\alpha}({l}_{{n}_{0}})$ such that $\Delta(\eta) = m$ and $\eta \restrict {l}_{m + 1}$  is to the left of $\tau$. Note that $\Delta(\sigma \restrict {l}_{{n}_{0}}) = m$. So ${\sigma}_{{n}_{0}}$ is to the left of $\sigma \restrict {l}_{{n}_{0}}$, and ${\sigma}_{{n}_{0}} \restrict {l}_{m + 1}$ is not to the left of $\tau$, whence ${\sigma}_{{n}_{0}} \restrict {l}_{m + 1}= \tau$. But then there is some $n \in [m + 1, {n}_{0})$ where $\sigma \restrict {l}_{n}$ was active, a contradiction.  
\end{proof}
Note that Lemma \ref{lem:activity} implies that for any $\sigma \in {T}_{\alpha}$, there is a unique minimal extension of $\sigma$ which is active.
\begin{Lemma} \label{lem:a.d}
	${T}_{\alpha}$ is finitely branching and ${\bigcup}_{\epsilon \leq \alpha} {[{T}_{\epsilon}]}$ is a.d.\ in $\BS$.
\end{Lemma}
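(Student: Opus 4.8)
The plan is to treat the two assertions separately, using the inductive hypothesis that every $T_\epsilon$ with $\epsilon < \alpha$ is finitely branching and that $\bigcup_{\epsilon<\alpha}[T_\epsilon]$ is a.d.\ For finite branching I would show by induction on $n$ that $T_\alpha(l_n)$ is finite: this is clear at $n=0$, and in passing from level $l_n$ to level $l_{n+1}$ each $\sigma\in T_\alpha(l_n)$ acquires the single successor $\tau_\sigma$ (one of which is replaced by $\tau^\ast$ in Case II) together with the successors $\tau_x$, $x\in U_n\subseteq V_n$, of the active node, and $V_n$ is finite. Since the immediate successors of any node of $T_\alpha$ at a level $m\in[l_n,l_{n+1})$ occur among the length-$(m+1)$ restrictions of the finitely many elements of $T_\alpha(l_{n+1})$, $T_\alpha$ is finitely branching.

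For almost disjointness it remains, by the inductive hypothesis, to bound $\lc f\cap g\rc$ when at least one of $f,g$ lies in $[T_\alpha]$. Suppose first $f\neq g$ are both in $[T_\alpha]$ and let $l_{n^\ast+1}$ be the least level where they differ. For every $n>n^\ast$ at least one of $f\restrict l_n$, $g\restrict l_n$ differs from the active node $\sigma_n$, hence has a unique extension in $T_\alpha(l_{n+1})$, and by condition (13) (condition (17) in Case II) that extension disagrees with every other element of $T_\alpha(l_{n+1})$ throughout $[l_n,l_{n+1})$; so $f$ and $g$ disagree everywhere on that interval. Thus $f(m)\neq g(m)$ for all $m\geq l_{n^\ast+1}$ and $\lc f\cap g\rc<\aleph_0$.

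The main case is $f\in[T_\alpha]$, $g\in[T_\epsilon]$ with $\epsilon<\alpha$; fix $i_0$ with $\epsilon_{i_0}=\epsilon$ and call a stage $n\geq i_0$ \emph{special} if $f\restrict l_n=\sigma_n$ and $f$ continues through some $\tau_x$, $x\in U_n$. At a non-special stage $n\geq i_0$ the extension of $f$ is a $\tau_\sigma$ or is $\tau^\ast$, and the first clause of condition (14) (condition (18) in Case II) makes $f$ and $g$ disagree throughout $[l_n,l_{n+1})$; so we are done if there are finitely many special stages. Otherwise list them as $n_0<n_1<\cdots$; I would observe that $f$ lies on the rightmost branch of $T_\alpha$ through level $l_{n_0}$ and leaves it at stage $n_0$ (one can leave the rightmost branch only through a Case II special extension), and that for $k\geq 1$ the label $L_{n_k}(f\restrict l_{n_k})$ is the label $x^{(k-1)}:=L_{n_{k-1}+1}(f\restrict l_{n_{k-1}+1})$ carried unchanged through the intervening non-special stages. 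The defining clauses of $U_{n_k}$ then give, writing $x^{(k)}=\langle p_1^{(k)},\bar{\hh}^{(k)}\rangle$ with $\bar{\hh}^{(k)}=\langle\hh^{(k)}_{\epsilon_i}:i\leq j^{(k)}_2\rangle$, that $p_1^{(k-1)}\subseteq p_1^{(k)}$, $j^{(k-1)}_2<j^{(k)}_2$, and $\hh^{(k-1)}_{\epsilon_i}=\hh^{(k)}_{\epsilon_i}\restrict\dom(\hh^{(k-1)}_{\epsilon_i})$ for $i\leq j^{(k-1)}_2$. Hence $p_\infty:=\bigcup_k p_1^{(k)}$ is an infinite partial function (each $p_1^{(k)}$ contributes a point in $I^\alpha_{k_{n_k}}\subseteq[l_{n_k},l_{n_k+1})$), and since $j^{(k)}_2\to\infty$ the union $\hh_\epsilon:=\bigcup\{\hh^{(k)}_{\epsilon_{i_0}}:i_0\leq j^{(k)}_2\}$ is a total function $T_\epsilon\to\omega_1$ satisfying $(\ast_1)$ and $(\ast_2)$ for $\dom(p_\infty)$ and $p_\infty$; by the observation following Definition \ref{def:rk1}, $p_\infty$ is a.d.\ from $[T_\epsilon]$, so $\lc g\cap p_\infty\rc<\aleph_0$. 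Finally, for all but finitely many $k$ we have $i_0\leq j^{(k)}_2$, and then the second clause of condition (14) (condition (18) in Case II) confines the agreements of $f\restrict l_{n_k+1}=\tau_{x^{(k)}}$ with $g\restrict l_{n_k+1}$ on $[l_{n_k},l_{n_k+1})$ to the single point $q_k$ of $\dom(p_1^{(k)})\cap I^\alpha_{k_{n_k}}$, where condition (12) (condition (16) in Case II) forces $f(q_k)=p_1^{(k)}(q_k)=p_\infty(q_k)$; so $f(q_k)=g(q_k)$ only when $g$ meets $p_\infty$ at $q_k$, which happens for finitely many $k$. Together with the finitely many stages below $i_0$ and the finitely many special stages with $j^{(k)}_2<i_0$, this gives $\lc f\cap g\rc<\aleph_0$.

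I expect the last case to be the main obstacle: one has to recognize that the bookkeeping labels $L_n$ carried along a branch $f$ must be glued together into one infinite partial function $p_\infty$ equipped with a rank witness $\hh_\epsilon$ for almost disjointness from $[T_\epsilon]$, verify that the indices $j^{(k)}_2$ grow without bound so the witness eventually names $\epsilon$ itself, and then exploit the earlier remark (such a witness certifies almost disjointness from $[T_\epsilon]$) to prevent $f$ from accidentally meeting $g$ infinitely often at the points where $f$ is forced to copy $p_\infty$.
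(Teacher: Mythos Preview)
Your proof is correct and follows essentially the same strategy as the paper, with the main case (branches of $[T_\alpha]$ versus $[T_\epsilon]$) handled via the same mechanism: the labels $L_n$ carried along $f$ cohere into an increasing system of partial functions and rank witnesses, and the resulting ordinal-valued function on $T_\epsilon$ forces almost disjointness.

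The only organizational difference is in how the contradiction is extracted. The paper argues directly: at each special stage $n$ with $j_2(n)\geq i$, the single agreement point $m\in\dom(p_{1,n})\cap[l_n,l_{n+1})$ forces, via $(\ast_2)$ for $\hh_{\epsilon_i,n}$, the strict inequality $\hh_{\epsilon_i,n}(h\restrict m{+}1)<\hh_{\epsilon_i,n}(h\restrict l_n)$, and coherence of the $\hh$'s between consecutive special stages chains these into an infinite descending sequence of ordinals. You instead assemble the global objects $p_\infty$ and $\hh_\epsilon$, invoke the observation after Definition~\ref{def:rk1} to conclude $p_\infty$ is a.d.\ from $[T_\epsilon]$, and then localize the $f$--$g$ agreements to the (finitely many) points where $g$ meets $p_\infty$. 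Your route is a little longer but makes the role of the labels more transparent; the paper's is terser but leaves the chaining of the inequalities across stages implicit. Both rest on the same underlying ordinal descent.
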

\begin{proof}
	It is clear from the construction that ${T}_{\alpha}$ is finitely branching. Fix $f, g \in [{T}_{\alpha}]$, with $f \neq g$. Let $n = \max\{i \in \omega: f \restrict {l}_{i} = g \restrict {l}_{i}\}$. It is clear from the construction that $\forall m \geq {l}_{n + 1}\[f(m) \neq g(m)\]$.
	
	Next, fix $\epsilon < \alpha$. Suppose $\epsilon = {\epsilon}_{i}$. Let $h \in \[{T}_{{\epsilon}_{i}}\]$ and $f \in \[{T}_{\alpha}\]$, and suppose for a contradiction that $\lc h \cap f\rc = {\aleph}_{0}$. So there are infinitely many $n \in \omega$ such that $f \restrict [{l}_{n}, {l}_{n + 1})  \cap  h \restrict [{l}_{n}, {l}_{n + 1}) \neq = 0$. For any $n \geq i$, this can only happen if $f \restrict {l}_{n} = {\sigma}_{n}$ and $f \restrict {l}_{n + 1} = {\tau}_{{x}_{n}}$ for some ${x}_{n} \in {U}_{n}$. Put ${x}_{n} = \langle {p}_{1, n}, {\bar{\hh}}_{n} \rangle$. Note that in this case ${L}_{n + 1}(f \restrict {l}_{n + 1}) = {x}_{n}$. For such $n$, let ${j}_{2}(n)$ be as in (10) with respect to ${x}_{n}$. So for infinitely many such $n$, ${j}_{2}(n) \geq i$. But then for infinitely many such $n$, ${\hh}_{{\epsilon}_{i}, n}(h \restrict \max{(\dom{({p}_{1, n})})} + 1) < {\hh}_{{\epsilon}_{i}, n}(h \restrict {l}_{n})$, producing an infinite strictly descending sequence of ordinals.    
\end{proof}
\begin{Lemma} \label{lem:vd}
	For each $A \in \cube$ and $p: A \rightarrow \omega$, there are $\alpha < {\omega}_{1}$ and $f \in \[{T}_{\alpha}\]$ such that $\lc p \cap f \rc = {\aleph}_{0}$.
\end{Lemma}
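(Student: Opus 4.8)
The plan is to choose $\alpha < \omega_{1}$ carefully, replace $p$ by a thinned-out restriction $q$ adapted to the intervals used in building $T_{\alpha}$, and then trace a branch of $T_{\alpha}$ that follows a coherent family of labels $L_{n}$ coding longer and longer finite pieces of $q$. First dispose of the trivial case: if $\lc f \cap p \rc = {\aleph}_{0}$ for some $\alpha < \omega_{1}$ and $f \in [T_{\alpha}]$ we are done, so assume $p$ is a.d.\ from every $[T_{\epsilon}]$. To pick $\alpha$: extend $p$ to some $\bar{p} \in \BS$ and fix an interval partition $J$ with $J_{n} \cap A \neq \emptyset$ for every $n$; since $\langle f_{\alpha} \rangle$ is a scale and $\langle I^{\beta} \rangle$ dominates every interval partition, choose $\alpha$ with $\bar{p} \leq^{\ast} f_{\alpha}$ and $J \leq^{\ast} I^{\alpha}$ (taking $\alpha$ larger still if the rank bookkeeping below requires it). Then $p(m) \leq f_{\alpha}(m)$ for all but finitely many $m \in A$, and $J \leq^{\ast} I^{\alpha}$ forces $A$ to meet $I^{\alpha}_{j}$ for all but finitely many $j$; since the sequence $\langle k_{n} \rangle$ produced in the construction of $T_{\alpha}$ increases to infinity, $A$ meets $I^{\alpha}_{k_{n}}$ for all but finitely many $n$. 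Fix $N_{0}$ and, for each $n \geq N_{0}$, a point $a_{n} \in A \cap I^{\alpha}_{k_{n}}$ with $p(a_{n}) \leq f_{\alpha}(a_{n})$ (possible once $N_{0}$ is large, since only finitely many points of $A$ violate $p(m) \leq f_{\alpha}(m)$), and put $q = \{ (a_{n}, p(a_{n})) : n \geq N_{0} \} \subseteq p$: an infinite partial function bounded by $f_{\alpha}$ whose domain meets each $I^{\alpha}_{k_{n}}$ ($n \geq N_{0}$) in exactly one point. Since $q \subseteq p$, it suffices to find $f \in [T_{\alpha}]$ with $\lc f \cap q \rc = {\aleph}_{0}$.

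For $i_{0} \geq N_{0}$ and $m \geq i_{0}$ set $p^{i_{0}}_{m} = q \restrict \left( I^{\alpha}_{k_{i_{0}}} \cup \dots \cup I^{\alpha}_{k_{m}} \right)$, and let $\bar{h}^{i_{0}}_{m}$ be the tuple of restrictions of the rank functions $H_{T_{{\epsilon}_{i}}, q}$ (well defined, since $q$ is a.d.\ from each $[T_{{\epsilon}_{i}}]$) to the relevant finite subtrees; these tuples cohere as $m$ grows. The role of conditions (3)--(4) on $g_{\alpha}, e_{\alpha}$ is that for large $m$ the ranges of these functions lie inside the relevant sets $w_{\alpha}$, so that $\left\langle p^{i_{0}}_{m}, \bar{h}^{i_{0}}_{m} \right\rangle \in V_{m}$ for all large $m$. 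The key claim is: whenever a node active at some stage $n$ carries a label of the form $\left\langle p^{i_{0}}_{j_{0}-1}, \bar{h}^{i_{0}}_{j_{0}-1} \right\rangle$, the pair $\left\langle p^{i_{0}}_{n}, \bar{h}^{i_{0}}_{n} \right\rangle$ (with the $\bar{h}$-part lengthened so that (10) holds with some $j_{2} > j_{1}$) lies in $U_{n}$ — it extends the old label in exactly the sense demanded by (8)--(10) and the definition of $U_{n}$, because every new point of $\dom(p^{i_{0}}_{n})$ lies beyond $\max(\dom(p^{i_{0}}_{j_{0}-1})) + 1$, so the old $\bar{h}$-components remain valid for the longer partial function and their coherent extensions witness (10).

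Now I would run the following loop. At the first stage where $\left\langle p^{i_{0}}_{n}, \bar{h}^{i_{0}}_{n} \right\rangle$ is eligible (for one fixed $i_{0} \geq N_{0}$), either it is placed on a fresh node in Case II, or it is blocked — which by the definition of $U_{n}$ in Case II means some existing node already carries a label of this shape; either way, from some stage on there is a node of $T_{\alpha}$ carrying such a label. Such a node is never the rightmost (hence unique unlabelled) node, so when it becomes active the construction is in Case I; and by Lemma \ref{lem:activity} and the remark after it, every node has a minimal active extension and a label descends unchanged through non-active, non-rightmost nodes, so the label does reach an active node again. There I pass to the child $\tau_{x}$ with $x = \left\langle p^{i_{0}}_{n}, \bar{h}^{i_{0}}_{n} \right\rangle \in U_{n}$, and (12) gives $\tau_{x}(a_{n}) = q(a_{n}) = p(a_{n})$. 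Iterating produces an increasing chain of nodes of $T_{\alpha}$ whose union is a branch $f \in [T_{\alpha}]$ with $f(a_{n_{j}}) = p(a_{n_{j}})$ at the infinitely many activation stages $n_{j}$, whence $\lc f \cap p \rc = {\aleph}_{0}$.

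I expect the crux to be the key claim of the second paragraph — that the needed extension always lies in $U_{n}$. It splits into (a) choosing the $\bar{h}$-components coherently, which I handle by always using restrictions of the single global rank functions $H_{T_{{\epsilon}_{i}}, q}$, together with the observation that new domain points appear only past the previous tree level, so nothing in the old $\bar{h}$ ever has to change and $({\ast}_{2})$ is not newly violated; and (b) making these ranges fit inside the relevant $w_{\alpha}$, which is precisely what the bookkeeping (3)--(4) is designed to arrange (once $\alpha$ is large enough that the relevant ranks of the trees $T_{\epsilon}$, $\epsilon < \alpha$, against $f_{\alpha}$-bounded partial functions are below $\alpha$). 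The only other thing to monitor is that the tracked node keeps getting activated so the process never stalls — exactly what Lemma \ref{lem:activity} supplies.
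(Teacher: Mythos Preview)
Your overall strategy---thin $p$ to a $q$ adapted to the intervals $I^\alpha_{k_n}$, use the global rank functions $H_{T_\epsilon}$ to build coherent labels, and trace a branch through successive $U_n$'s via Lemma~\ref{lem:activity}---is exactly the paper's approach, and you have in fact unpacked the branch-tracing more carefully than the paper's terse ``it follows from the construction.''

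The genuine gap is in your choice of $\alpha$. You write ``taking $\alpha$ larger still if the rank bookkeeping below requires it'' and later ``once $\alpha$ is large enough that the relevant ranks of the trees $T_\epsilon$, $\epsilon<\alpha$, \dots\ are below $\alpha$.'' This is circular: as $\alpha$ grows, so does the set of $\epsilon$'s whose rank functions must be controlled, so ``large enough'' is not a single inequality you can satisfy by pushing $\alpha$ up. Moreover, it is not enough that $\ran(H_\epsilon)\subset\alpha$; you need $H_\epsilon(\sigma)\in w_\alpha(n)$ for $\sigma\in T_\epsilon\restrict n$, which by (4) requires that the growth function $g_\epsilon$ (recording how far out in $e_{\xi_\epsilon}$ one must look to see $H_\epsilon''(T_\epsilon\restrict n)$) be eventually dominated by $f_\alpha$. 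These $g_\epsilon$ again depend on $\epsilon<\alpha$, so you cannot simply ``take $\alpha$ large enough'' to dominate all of them.

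The paper resolves this in one stroke by taking a countable elementary submodel $M\prec H(\theta)$ containing $p$, the scale, the interval partitions, and the sequence $\langle T_\epsilon\rangle$, and setting $\alpha=M\cap\omega_1$. Then for every $\epsilon<\alpha$ the objects $H_\epsilon$, $\xi_\epsilon=\sup(\ran H_\epsilon)+1$, and the corresponding $g_\epsilon$ all lie in $M$, hence $\xi_\epsilon<\alpha$ and $g_\epsilon\leq^\ast f_\alpha$ automatically. (An explicit club-closure argument would also work, but you have to state it.) Note too that the paper uses $H_\epsilon=H_{T_\epsilon,p}$ rather than $H_{T_\epsilon,q}$, precisely so that these functions are defined before $\alpha$ is chosen; since $q\subset p$, $(\ast_1)$ and $(\ast_2)$ for $H_\epsilon$ persist when $p$ is replaced by $q$.
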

\begin{proof}
	Suppose for a contradiction that there are $A \in \cube$ and $p: A \rightarrow \omega$ such that $p$ is a.d.\ from $\[{T}_{\alpha}\]$, for each $\alpha < {\omega}_{1}$. Let $M \prec H(\theta)$ be a countable elementary submodel containing everything relevant. Put $\alpha = M \cap {\omega}_{1}$. For each $\epsilon < \alpha$, let ${H}_{\epsilon}$ denote ${H}_{{T}_{\epsilon}, p}$, and note that ${H}_{\epsilon}$ and $\ran{({H}_{\epsilon})}$ are members of $M$. Let ${\xi}_{\epsilon} = \sup{(\ran{({H}_{\epsilon})})} + 1 < \alpha$. Find $g \in M \cap \BS$ such that for $n \in \omega$, ${H}_{\epsilon}''{{T}_{\epsilon} \restrict n} \subset \{{e}_{{\xi}_{\epsilon}}(j): j \leq g(n)\}$. Since $\forallbutfin n \in \omega \[g(n) \leq {f}_{\alpha}(n)\]$, it follows from (4) that for all but finitely many $n \in \omega$, for all $\sigma \in {T}_{\epsilon} \restrict n$, ${H}_{\epsilon}(\sigma) \in {w}_{\alpha}(n)$. Now, find $q \subset p$ such that $\forall m \in \dom{(q)}\[q(m) \leq {f}_{\alpha}(m)\]$ and $\forallbutfin n \in \omega \[\lc \dom(q) \cap {I}^{\alpha}_{n} \rc = 1 \]$. Note that for any $\epsilon < \alpha$, (${\ast}_{1}$) and (${\ast}_{2}$) are satisfied when $T$ is replaced there with ${T}_{\epsilon}$, ${H}_{T, p}$ is replaced with ${H}_{\epsilon}$, $A$ with $\dom{(q)}$, and $p$ with $q$. But now, it follows from the construction that there is $f \in \[{T}_{\alpha}\]$ such that for infinitely many $n \in \omega$, there is $m \in [{l}_{n}, {l}_{n + 1}) \cap \dom{(q)}$ such that $q(m) = f(m)$.  
\end{proof}
\section{Remarks and Questions} \label{sec:quest}
	The construction in this paper is very specific to ${\omega}_{1}$; indeed, it is possible to show that $\d$ is not always an upper bound for $\ac$. A modification of the methods of Section 4 of \cite{sh700} shows that if $\kappa$ is a measurable cardinal and if $\lambda = \cf{(\lambda)} = {\lambda}^{\kappa} > \mu = \cf{(\mu)} > \kappa$, then there is a c.c.c.\ poset $\P$ such that $\lc \P \rc = \lambda$, and $\P$ forces that $\b = \d = \mu$ and $\a = \ac = \c = \lambda$.

	As mentioned in Section \ref{sec:intro}, we see the result in this paper as providing a weak positive answer to the following basic question, which has remained open for long.
\begin{Question} \label{q:aandd}
	If $\d = {\aleph}_{1}$, then is $\a = {\aleph}_{1}$?
\end{Question}
There are also several open questions about upper and lower bounds for $\ac$.
\begin{Question} [Brendle and Khomskii~\cite{perfectmad}]\label{q:yj}
	If $\s = {\aleph}_{1}$, then is $\ac = {\aleph}_{1}$?
\end{Question}
\begin{Question}\label{q:h}
	Is  $\h \leq \ac$?
\end{Question}
Regarding Question \ref{q:yj}, it is proved in Brendle and Khomskii~\cite{perfectmad} that if $\V$ is any ground model satisfying $\CH$ and $\P$ is any poset forcing that all splitting families in $\V$ remain splitting families in $\VG$, then $\P$ also forces that $\ac = {\aleph}_{1}$. This result suggests that Question \ref{q:yj} should have a positive answer, and showing this would be an improvement of the result in this paper.
\bibliographystyle{amsplain}
\bibliography{Bibliography}
\end{document}